\newtheorem{prop}{Proposition}[section]
\newtheorem{thm}[prop]{Theorem}
\newtheorem{cor}[prop]{Corollary}
\newtheorem{rem}[prop]{Remark}
\begin{document}

\title
[rational maps on a variety of general type]
{Parametrization of rational maps \\ on a variety 
of general type, \\  and the finiteness theorem}
\author{Lucio Guerra \and Gian Pietro Pirola}
\date{}
\maketitle

\begin{abstract}
{\noindent In a previous paper, we provided some update 
in the treatment of the finiteness theorem for 
rational maps of finite degree from a fixed variety 
to varieties of general type.
In the present paper we present another improvement,
introducing the natural parametrization of maps by means of the
space of linear projections in a suitable projective space,
and this leads to some new insight
in the geometry of the finiteness theorem.

\medskip

\noindent
Mathematics Subject Classification:   14E05, 14N05

\noindent
Keywords: rational maps, pluricanonical maps, varieties of general type,
canonical volume}
\end{abstract}

\section*{Introduction}

Let $X$ be an algebraic variety of general type, over the complex field. 
The dominant rational maps of finite degree
$X \dasharrow Y$ to varieties of general type, 
up to birational isomorphisms $Y \dasharrow Y'$,
form a finite set. We call this the {\em finiteness theorem for rational maps
on a variety of general type}. The proof follows from the approach of 
Maehara \cite{M} joined with some recent advances in the theory of
pluricanonical maps, due to Hacon and McKernan \cite{HK} 
and to Takayama \cite{Tak}, \cite{Tak2}.

In our paper \cite{GP}, motivated by the wish of some effective estimate
for the finite number of maps in the theorem, we provided some update 
and refinement in the treatment of the subject.
We brought the rigidity theorem to a general form,
avoiding certain technical restrictions, we pointed out the role of
the canonical volume ${\rm vol}(K_X)$ in bounding the 
rational maps in the finiteness theorem, and we proposed
a new argument leading to a refined version of the theorem.

However, something still not satisfactory was the use of a certain bunch
of subvarieties of Chow varieties as a parameter space for rational maps,
as in Maehara's approach is too. The most natural and simple parameter space 
should be the space of linear projections in a suitable projective space,
already appearing for instance in the work
of Kobayashi and Ochiai \cite{KO}. 

In the present paper we are able to replace the Chow parametrization
with the natural parametrization, and this leads to some new insight
into the geometry of the finiteness theorem. The main result concerns the
structure of the special birational equivalence classes of maps viewed
as unions of connected components of a certain space of linear rational maps, 
see Theorem \ref{connectedcomponent}.
This has as an immediate consequence a better refined finiteness theorem,
see Theorem \ref{finiteness}.
\bigskip

\small \noindent {\em Acknowledgements.}
The first author is partially supported by:
Finanziamento Ricerca di Base 2008 Univ. Perugia.
The second author is partially supported by:
1) INdAM (GNSAGA);
2) FAR 2010 (PV):{\em ``Variet\`{a} algebriche, calcolo
algebrico, grafi orientati e topologici"}.
\normalsize 

\section{Preliminary material}

\subsection*{a. Results on pluricanonical maps}

A recent achievement in the theory of pluricanonical maps is the following
theorem of uniform pluricanonical birational embedding, due to
Hacon and McKernan \cite{HK} and to Takayama \cite{Tak}.

\begin{thm} \label{HKT}
For any dimension $n$ there is some positive integer $r_n$ such that: for
every $n$-dimensional variety $V$ of general type the multicanonical divisor
$r_nK_V$ defines a birational embedding $V \dashrightarrow V' \subset
\mathbb P^M$.
\end{thm}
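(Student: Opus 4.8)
The plan is to argue by induction on the dimension $n$. The case $n=1$ is classical: for a smooth projective curve $C$ of genus $\geq 2$ the divisor $3K_C$ is very ample, so $r_1=3$ works. Assume the statement in all dimensions $<n$, with constants $r_1,\dots,r_{n-1}$ fixed, and aim to produce $r_n$ depending only on $n$ and these constants. I would organize the inductive step around the canonical volume $\mathrm{vol}(K_V)$, fixing a threshold $c_n$ (explicit in $n$ and the $r_j$) and treating separately the cases $\mathrm{vol}(K_V)>c_n$ and $\mathrm{vol}(K_V)\le c_n$.

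In the large-volume case I would proceed in the Angehrn--Siu / Takayama style. For a general pair of points $x,y\in V$, use Riemann--Roch together with the largeness of $\mathrm{vol}(K_V)$ to produce an effective $\mathbb{Q}$-divisor $\Delta\sim_{\mathbb{Q}}\lambda K_V$, with $\lambda$ bounded in terms of $n$, having an isolated non-klt center at $x$ and also non-klt at $y$. Nadel vanishing then kills the relevant $H^1$, so a section of $K_V+\Delta$, hence of a bounded multiple of $K_V$, separates $x$ from $y$; doing this uniformly and clearing denominators yields a single $r_n'$, depending only on $n$ once $\mathrm{vol}(K_V)>c_n$, with $\phi_{r_n'K_V}$ birational. (Alternatively one restricts to a general $S\in|mK_V|$, which is of general type of dimension $n-1$ with $K_S=(K_V+S)|_S$ by adjunction, applies the inductive statement on $S$, and lifts the resulting pluricanonical sections via a Hacon--McKernan / Siu extension theorem, letting $S$ vary through a covering family to recover birationality on $V$.)

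In the small-volume case the crucial input is that the set $\mathcal{V}_n$ of canonical volumes of $n$-dimensional varieties of general type satisfies the descending chain condition, and in particular is bounded away from $0$; I would establish this in tandem with the embedding statement, by the same induction, controlling how volumes behave under restriction to minimal log canonical centers and invoking boundedness of the resulting lower-dimensional pairs. Granting the DCC, only finitely many values of $\mathcal{V}_n$ lie in $(0,c_n]$, each parametrizing a bounded family of varieties, so semicontinuity of plurigenera yields a uniform $r_n''$ there. Taking $r_n$ to be a common multiple of $r_n'$ and $r_n''$ finishes the step.

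The real obstacle is the small-volume regime: proving the DCC for volumes together with the needed boundedness is exactly where deep machinery is unavoidable --- the full minimal model techniques (finite generation of the canonical ring and its consequences) in the Hacon--McKernan proof, or the delicate multiplier-ideal and subadjunction estimates along covering families of log canonical centers in Takayama's proof. A secondary technical point is that throughout the induction the divisor $S$ one restricts to comes from a pluricanonical system rather than from a fixed very ample class, which forces one to carry klt/lc pairs $(V,\Delta)$ along the induction and to have the extension theorem available in that generality.
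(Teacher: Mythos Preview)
The paper does not prove this theorem. It is quoted in \S 1.a as a preliminary result, attributed to Hacon--McKernan \cite{HK} and Takayama \cite{Tak}, and no argument is given beyond the citation; the paper simply uses it as input for the rest of the work. So there is no ``paper's own proof'' to compare your proposal against.

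That said, your sketch is a fair high-level outline of the strategy actually carried out in \cite{HK} and \cite{Tak}: induction on dimension, separation of points by producing $\mathbb{Q}$-divisors with prescribed non-klt centers and invoking Nadel vanishing, extension of pluricanonical forms from suitable centers or hypersurface sections, and the simultaneous bootstrapping of the lower bound on the volume (Theorem~\ref{HK} in the paper). You also correctly identify where the real work lies---the small-volume regime and the accompanying boundedness/DCC arguments---and you are honest that this is exactly the deep part you are not supplying. As a proof of the theorem your proposal is therefore only a roadmap, not a proof; but since the paper itself makes no attempt at a proof, your write-up already goes well beyond what the paper contains.
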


A basic tool is the canonical volume of a variety, 
the invariant arising in the
asymptotic theory of divisors, see Lazarsfeld's book \cite{L}.
In terms of the canonical volume we have a bound 
\begin{equation} \label{degvol}
\deg V' \leq {\rm vol} (r_{n}K_{V}),
\end{equation}
see \cite{HK}, Lemma 2.2.
Moreover from elementary geometry we have a bound  
\begin{equation} \label{embdim}
M \leq \deg V' +n-1.  
\end{equation}
Note that the embedded variety $V'$ needs not be smooth. 
Intimately related to the theorem above is the following
result, proved in \cite{HK} and in \cite{Tak}.

\begin{thm} \label{HK}
For any dimension $n$ there is some positive number $\epsilon_n$ 
such that every $n$-dimensional variety $V$ of general type has
${\rm vol} (K_{V}) \geq \epsilon_n$.
\end{thm}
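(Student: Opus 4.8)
The plan is to derive this as a direct consequence of the uniform pluricanonical embedding Theorem \ref{HKT}, using only the degree bound \eqref{degvol} together with the elementary homogeneity of the volume. Fix the dimension $n$, let $r_n$ be the integer furnished by Theorem \ref{HKT}, and I claim that $\epsilon_n := 1/r_n^n$ works.

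First I would recall that the canonical volume is a birational invariant, so one may freely replace $V$ by a smooth projective model, and that for any divisor $D$ on an $n$-dimensional variety and any positive integer $k$ one has $\mathrm{vol}(kD) = k^n \, \mathrm{vol}(D)$; this is immediate from the definition $\mathrm{vol}(D) = \limsup_m n! \, h^0(mD)/m^n$, see \cite{L}. Applying this with $D = K_V$ and $k = r_n$ gives
\[
\mathrm{vol}(r_n K_V) = r_n^n \, \mathrm{vol}(K_V).
\]

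Next, by Theorem \ref{HKT} the divisor $r_n K_V$ defines a birational embedding $V \dashrightarrow V' \subset \mathbb P^M$, so the bound \eqref{degvol} applies and yields $\mathrm{vol}(r_n K_V) \geq \deg V'$. Since $V'$ is the image of a birational map from the $n$-dimensional variety $V$, it is an $n$-dimensional irreducible projective variety, hence $\deg V' \geq 1$. Combining the two displays gives $r_n^n \, \mathrm{vol}(K_V) \geq 1$, that is $\mathrm{vol}(K_V) \geq 1/r_n^n = \epsilon_n$, as desired. (Note that the embedding-dimension bound \eqref{embdim} plays no role in this particular statement.)

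I want to stress that there is essentially no obstacle here once Theorem \ref{HKT} is available: all of the difficulty is concentrated in the \emph{uniformity} of the constant $r_n$ over all $n$-dimensional varieties of general type, which is exactly the content of \cite{HK} and \cite{Tak}. The only points to verify are the precise form of \eqref{degvol} — namely that it holds for the birational pluricanonical map, which is the cited Lemma 2.2 of \cite{HK} — and the elementary bookkeeping that the image $V'$ really has dimension $n$ and therefore positive degree. An agreeable by-product of this argument is that it makes the dependence explicit: the theorem holds with $\epsilon_n = 1/r_n^n$.
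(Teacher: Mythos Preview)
Your argument is correct, but note that the paper does \emph{not} supply its own proof of this statement: it merely records it as a result ``proved in \cite{HK} and in \cite{Tak}''. So there is nothing in the paper to compare your reasoning against line by line.

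That said, your derivation is the standard and clean way to see that Theorem~\ref{HK} is a formal consequence of Theorem~\ref{HKT} together with the degree--volume inequality \eqref{degvol}: from $1 \le \deg V' \le \mathrm{vol}(r_nK_V) = r_n^{\,n}\,\mathrm{vol}(K_V)$ one gets $\mathrm{vol}(K_V) \ge r_n^{-n}$. The only substantive inputs are exactly those you flag: the uniform constant $r_n$ from \cite{HK},~\cite{Tak}, and the inequality \eqref{degvol} (Lemma~2.2 of \cite{HK}). One small remark worth making explicit is that your value $\epsilon_n = r_n^{-n}$ is in general far from optimal --- for instance the paper quotes $r_3 \le 73$ and $\epsilon_3 \ge 1/2660$, whereas $73^{-3} \approx 1/389017$ --- but of course this does not affect the existence statement, which is all that Theorem~\ref{HK} asserts.
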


For instance, concerning the minimum $r_n$ we know from the classical theory 
that $r_1=3$ and $r_2=5$, and a recent result is that $r_3 \leq 73$, while
concerning the maximum $\epsilon_n$
it is clear that $\epsilon_1 = 2$ and $\epsilon_2 = 1$ and a recent
result is $\epsilon_3 \geq 1/2660$, see J. A. Chen and M. Chen \cite{CC}.
Note that \cite{HK} and \cite{Tak} do not give
explicit bounds for $r_n$ and $\epsilon_n$ in the theorems above.

\subsection*{b. Bounds for the degree of a rational map}

Let $f: X \dasharrow Y$ be a rational map of finite degree
between varieties of general type. 
Because of Theorem \ref{HKT}, taking the $r_n$-canonical
birational models $X'$ and $Y'$ in $\mathbb P^M$ 
(note that $Y'$ lies within the embedding space of $X'$), 
the map $f$ is identified with a {\em linear rational map} $X' \dasharrow Y'$,
a rational map which is the restriction of a linear projection 
$\mathbb P^M \dasharrow \mathbb P^M$.
For a linear map of finite degree the inequality
$\deg f \, \deg Y' \leq \deg X'$ holds.
Using (\ref{degvol}) it follows that
\begin{equation} \label{deg1}
\deg f \leq \deg X' \leq (r_n)^n\, {\rm vol} (K_X).
\end{equation}

A more precise estimate is as follows. For any rational map of finite degree 
the inequality $\deg f \; {\rm vol} (K_Y) \leq {\rm vol} (K_X)$ holds,
see \cite{GP}, Proposition 3.2. Using Theorem \ref{HK} it follows that 
\begin{equation} \label{deg2}
\deg f \leq \dfrac{1}{\epsilon_n}\, {\rm vol} (K_X).
\end{equation}
This bound is sharp for curves, and in this case it reduces
to the usual bound from the Hurwitz formula.

\subsection*{c. Families of rational maps}

Let $T$ be a smooth variety. 
If $X \rightarrow T$ is a relative scheme over $T$,
we denote by $X(t)$ the scheme fibre over $t$, and
by $X_t$ the associated reduced scheme.

A {\em family of varieties}, parametrized by a smooth variety $T$,
is a surjective morphism $X \rightarrow T$, with $X$ a variety,
such that  every scheme fibre $X(t)$ is: $(i)$ irreducible, $(ii)$ generically smooth
(in order to be assigned multiplicity one in the associated algebraic cycle,
see Fulton \cite{F}, Chap. 10),  
and $(iii)$ of dimension equal to the relative dimension of $X$ over $T$, of course. 
When the structure morphism is projective or smooth,
we speak of a family of projective varieties or a family of smooth varieties.

A {\em family of rational maps} is the datum of
a family of varieties $X \rightarrow T$ and 
a relative scheme $X' \rightarrow T$, over the same smooth variety $T$,
and a rational map $f: X \dasharrow X'$, commuting with the structural
projections, which for every $t \in T$ 
restricts to a rational map $f_t: X_t \dasharrow X'_t$.

\subsection*{d. The rigidity theorem}

A family of rational maps {\em  on a fixed variety} $X$ is the datum of a
relative scheme $Y \rightarrow T$, with $T$ smooth, and a rational map
$$f: X\times T \dasharrow Y$$
which is a family of rational maps $f_t: X \dasharrow Y_t$
in the sense of the previous definition.

A {\em trivial family} is one which is obtained as follows.
Let $h: X \dasharrow U$ be a rational map and
let $g: T \times U \dasharrow Y$ be a birational isomorphism
which is a family of birational isomorphisms $g_t: U \dasharrow Y_t$.
Then the composite map 
$$T \times X \overset{1 \times h}{\dasharrow} T \times U 
\overset{g}{\dasharrow} Y$$
is  a trivial family, because
all maps $g_t \circ h$ are birationally equivalent.

Recall that two dominant rational maps 
$f: X \dasharrow Y$ and $f': X \dasharrow Y'$,
defined on the same variety, are {\em birationally equivalent} if there is 
a birational isomorphism $g:Y \dasharrow Y'$ such that  $f' = g \circ f$.  

For projective varieties of general type and 
dominant rational maps of finite degree
there are results of rigidity.

\begin{thm} \label{rigidity}
Let $X$ be a smooth projective variety of general type.
Let $T$ be a smooth variety, let $Y \rightarrow T$ be a family of 
smooth projective varieties of general type, and let
$f: X\times T \dasharrow Y$ be a family of rational maps of finite degree.
Then $f$ is  a trivial family, so all maps $f_t$ are birationally equivalent.
\end{thm}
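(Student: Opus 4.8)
\medskip
\noindent\textbf{Plan of proof.}
The plan is to use the uniform pluricanonical embeddings of Theorem \ref{HKT} to replace the family $f$ by a family of linear subspaces of the fixed vector space $H^{0}(X,mK_{X})$, and then to show that this family of subspaces is constant; the triviality of $f$ then follows at once. First I would reduce to the case in which $T$ is a smooth affine curve: to prove that $f_{t}$ and $f_{t'}$ are birationally equivalent for two arbitrary points of $T$ it is enough to join them by a connected chain of irreducible curves in $T$ and to restrict the family to the normalization of each curve in the chain.

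Set $n=\dim X=\dim Y_{t}$, $m=r_{n}$, and $V=H^{0}(X,mK_{X})$. By Theorem \ref{HKT} the system $|mK_{X}|$ gives a birational embedding $\Phi_{|mK_{X}|}\colon X\dasharrow X'\subset\mathbb P^{N}$; by the invariance of plurigenera the integer $k:=h^{0}(Y_{t},mK_{Y_{t}})$ does not depend on $t$, and, possibly after shrinking $T$, the systems $|mK_{Y_{t}}|$ give birational embeddings $\Phi_{|mK_{Y_{t}}|}\colon Y_{t}\dasharrow Y'_{t}\subset\mathbb P^{k-1}$ varying algebraically with $t$. Pulling pluricanonical forms back along $f_{t}$ gives an injection $f_{t}^{*}\colon H^{0}(Y_{t},mK_{Y_{t}})\hookrightarrow V$ (a rational section of $mK_{X}$ so produced has no poles, by the ramification formula on a resolution of $f_{t}$), with image a $k$-dimensional subspace $W_{t}\subseteq V$. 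Since $f_{t}$ is dominant, the rational map $X\dasharrow\mathbb P^{k-1}$ attached to the subsystem $W_{t}\subseteq|mK_{X}|$ equals $\Phi_{|mK_{Y_{t}}|}\circ f_{t}$ and has image $Y'_{t}$, which is birational to $Y_{t}$; hence $f_{t}$ is birationally equivalent to the map $X\dasharrow Y'_{t}$ defined by $W_{t}$, and in particular $W_{t}=W_{t'}$ forces $f_{t}\sim f_{t'}$. The rule $t\mapsto W_{t}$ is a morphism $T\to\grass(k,V)$: the sheaf $p_{*}\,\omega_{Y/T}^{\otimes m}$ is locally free of rank $k$ (by the invariance of plurigenera and base change), and $f^{*}$ realizes it, saturated over the smooth curve $T$, as a rank-$k$ subbundle $\mathcal W\subseteq V\otimes\mathcal O_{T}$ with fibre $W_{t}$. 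Moreover, by (\ref{degvol}) and (\ref{embdim}), together with the inequality $\deg f_{t}\cdot{\rm vol}(K_{Y_{t}})\le{\rm vol}(K_{X})$ recalled in Section 1, the degree and ambient dimension of $Y'_{t}$ are bounded independently of $t$; thus $Y'_{t}=\overline{\Phi_{W_{t}}(X)}$ varies in a finite union of Chow varieties, and all the data in play are of finite type.

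The crux is to prove that the morphism $t\mapsto W_{t}\in\grass(k,V)$ has vanishing derivative, hence, as $T$ is connected, is constant, say $W_{t}\equiv W$. Equivalently, the second fundamental form of the subbundle $\mathcal W\subseteq V\otimes\mathcal O_{T}$ vanishes, i.e.\ $\mathcal W$ is flat for the trivial connection. I would obtain this from the ramification formula: on a smooth model $\widetilde X\to X\times T$ carrying a morphism $g\colon\widetilde X\to Y$ over $T$ one has $K_{\widetilde X/T}=g^{*}K_{Y/T}+R$ with $R\ge 0$, while $H^{0}(\widetilde X_{t},mK_{\widetilde X_{t}})=V$ for every $t$ since $\widetilde X_{t}$ is birational to the \emph{fixed} variety $X$; then $W_{t}$ is the $g_{t}^{*}$-image of $H^{0}(Y_{t},mK_{Y_{t}})$, and the derivative at $t_{0}$ of a $T$-family of sections of $mK_{Y/T}$, pulled back to $\widetilde X$, decomposes into the pullback of a section on $Y_{t_{0}}$ and a term governed by the motion of the divisor $R$ inside its own linear equivalence class — both of which lie in $W_{t_{0}}$, so the velocity of $W_{t}$ is zero. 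This first-order computation is the step I expect to be the main obstacle; it requires keeping careful track of the variation of $g$ and $R$ with $t$ and of the identification $H^{0}(\widetilde X_{t},mK_{\widetilde X_{t}})=V$.

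Once $W_{t}\equiv W$, the image $Y'_{t}=\overline{\Phi_{W}(X)}$ is one and the same subvariety $U\subset\mathbb P^{k-1}$ for all $t$, and $\Phi_{|mK_{Y_{t}}|}\circ f_{t}=\Phi_{W}\colon X\dasharrow U$ is one and the same rational map; hence every $f_{t}$ is birationally equivalent to $h:=\Phi_{W}\colon X\dasharrow U$, and the birational inverses $g_{t}\colon U\dasharrow Y_{t}$ of the $m$-pluricanonical maps assemble into a family of birational isomorphisms $g\colon T\times U\dasharrow Y$ with $f=g\circ(1\times h)$. Therefore $f$ is a trivial family and all the $f_{t}$ are birationally equivalent, as claimed.
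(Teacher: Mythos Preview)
The paper does not prove this theorem here; it is quoted from \cite{GP}, Theorem~2.1 (refining Maehara \cite{M}), so there is no in-paper argument to compare your proposal against directly.

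Your overall plan --- encode each $f_t$ by the subspace $W_t = f_t^{*} H^{0}(Y_t, mK_{Y_t}) \subseteq V$ and reduce rigidity to the constancy of the morphism $t \mapsto W_t$ into $\grass(k,V)$ --- is the classical Kobayashi--Ochiai strategy, and the surrounding steps (reduction to a curve, construction of the subbundle $\mathcal W \subseteq V \otimes \mathcal O_T$ via invariance of plurigenera, and the deduction of triviality from $W_t \equiv W$ in your final paragraph) are correct and clearly laid out.

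The gap is exactly where you flag it, and your sketch of the first-order computation does not close it. You assert that the $t$-derivative of $g_t^{*} s(t)$ splits as a pullback from $Y_{t_0}$ plus a term ``governed by the motion of $R$'', both lying in $W_{t_0}$. But $W_{t_0}$ is \emph{not} the space of sections of $mK_{\widetilde X_{t_0}}$ vanishing along $mR_{t_0}$: that space is $H^{0}\bigl(\widetilde X_{t_0}, g_{t_0}^{*}\omega_{Y_{t_0}}^{\otimes m}\bigr)$, which strictly contains $g_{t_0}^{*} H^{0}(Y_{t_0}, mK_{Y_{t_0}}) = W_{t_0}$ as soon as $\deg g_{t_0} > 1$, since a global section of a pulled-back line bundle need not descend. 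So even if your ``$R$-term'' vanishes on $mR_{t_0}$, that does not place it in $W_{t_0}$; and the ``pullback term'' is not well-defined without already having a connection on $\pi_{*}\omega_{Y/T}^{\otimes m}$ compatible with the trivial one on $V\otimes\mathcal O_T$, which is precisely what is at stake. The known arguments (Kobayashi--Ochiai, Maehara, and \cite{GP}) do not proceed by such a direct differentiation of pluricanonical forms; they rely instead on the vanishing $H^{0}(Y_t, T_{Y_t}) = 0$ for smooth projective $Y_t$ of general type and an analysis of the deformation space of the map, or pass through canonical models. If you wish to push the Grassmannian formulation through, you will need a different mechanism for the constancy of $W_t$ than the ramification formula alone.
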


The rigidity theorem above was proved by Maehara \cite{M}
with some technical restrictions, and has been brought to the
present form in our previous paper \cite{GP}, Theorem 2.1.
More generally, if the family of image varieties is not known 
to be a smooth family, one has the following.

\begin{cor} \label{weakrigidity} 
Let $X$ be a projective variety of general type.
Let $T$ be a smooth variety, let $Y \rightarrow T$ be a  family of 
projective varieties of general type, and let
$f: X\times T \dasharrow Y$ be a family of rational maps of finite degree. 
There is a nonempty open subset $T'$ of $T$ such that the restriction
$f|_{T'}: X\times T' \dasharrow Y|_{T'}$ is a trivial family.
\end{cor}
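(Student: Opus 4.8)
The plan is to deduce Corollary \ref{weakrigidity} from Theorem \ref{rigidity} by replacing, over a suitable nonempty open subset of $T$, both $X$ and the family $Y \to T$ with smooth models to which the rigidity theorem applies, and then transporting the resulting triviality back across the birational modifications that were made. First I would resolve the source: pick a birational morphism $\rho \colon \tilde X \to X$ with $\tilde X$ smooth projective, still of general type, and set $\tilde f := f \circ (\rho \times 1_T) \colon \tilde X \times T \dasharrow Y$. This is again a family of rational maps of finite degree, since the degree of a generically finite map is a birational invariant, and its fibres $\tilde f_t$ differ from $f_t$ only by the modification $\rho$.

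Next --- and this is the core step --- I would turn $Y \to T$ into a family of smooth projective varieties after shrinking the base. Take a projective resolution $\pi \colon \tilde Y \to Y$; since $Y$ is a variety and $\pi$ is birational, $\tilde Y$ is irreducible, and by generic flatness together with generic smoothness (we work over $\mathbb C$, and $\tilde Y$, $T$ are smooth) there is a nonempty open $T_1 \subseteq T$ over which $\tilde Y \to T_1$ is smooth and projective. Shrinking $T_1$ further, a dimension count on the image in $T$ of the proper closed locus where $\pi$ is not an isomorphism shows that for $t \in T_1$ the induced map $\tilde Y(t) \to Y(t)$ is an isomorphism over a dense open of the irreducible fibre $Y(t)$, hence a resolution of it; so $\tilde Y(t)$ is a smooth projective variety birational to $Y_t$, hence of general type. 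It is connected because the $Y(t)$ are connected and $\pi$ is proper (Stein factorisation), and smooth plus connected is irreducible. Thus $\tilde Y \to T_1$ is a family of smooth projective varieties of general type in the sense of \S 1.c.

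Now I compose $\tilde f$ with $\pi^{-1}$ (legitimate because $\tilde f$, being fibrewise dominant, is dominant onto the irreducible $Y$), obtaining $g := \pi^{-1} \circ \tilde f \colon \tilde X \times T_1 \dasharrow \tilde Y|_{T_1}$, a family of rational maps of finite degree whose fibres for general $t$ are $\pi_t^{-1} \circ \tilde f_t$. Theorem \ref{rigidity} applies and yields that $g$ is a trivial family: $g = \gamma \circ (1 \times \mu)$ for some $\mu \colon \tilde X \dasharrow U$ and some $\gamma \colon T_1 \times U \dasharrow \tilde Y|_{T_1}$ restricting to birational isomorphisms $\gamma_t \colon U \dasharrow \tilde Y_t$. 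Setting $h := \mu \circ \rho^{-1} \colon X \dasharrow U$ and $g' := \pi \circ \gamma \colon T_1 \times U \dasharrow Y|_{T_1}$, one checks $f|_{T_1} = g' \circ (1 \times h)$, and $g'$ restricts fibrewise to $g'_t = \pi_t \circ \gamma_t$, which is birational for general $t$. Shrinking $T_1$ to a nonempty open $T'$ so that all the fibrewise identities and birationalities hold for every $t \in T'$, we conclude that $f|_{T'}$ is a trivial family, as required.

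The main obstacle is the core step: arranging that a single resolution of the total space $Y$ induces, over a nonempty open subset of the base, resolutions of the individual fibres, while simultaneously flatness, smoothness and connectedness of those fibres all hold there. Controlling this fibrewise behaviour of resolution of singularities is the only point requiring genuine care; the remaining steps are routine bookkeeping with birational modifications and with the notion of trivial family.
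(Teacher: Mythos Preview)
Your argument is correct and is exactly the standard reduction the paper has in mind: the corollary is stated without proof, as an immediate consequence of Theorem~\ref{rigidity} obtained by passing to smooth models of $X$ and of the family $Y\to T$ over a generic open subset of $T$. The only point worth tightening is the irreducibility of the fibres $\tilde Y_t$: rather than invoking Stein factorisation (which needs normality of the target), it is cleaner to note that the exceptional locus $W\subset\tilde Y$ of $\pi$ has $\dim W<\dim\tilde Y$, so for general $t$ one has $\dim W_t<n$, whence $\tilde Y_t\smallsetminus W_t$ is dense in the smooth equidimensional $\tilde Y_t$ and is isomorphic to a dense open of the irreducible $Y_t$.
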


\section{Graphs and images in a family of maps}

Let $f: X \dasharrow X'$ be a family of rational maps
parametrized by a smooth variety $T$, as in \S 1.c.
Consider the relative product $X \times_{T} X'$ 
and call $p$ and $p'$ the projections to $X$ and $X'$.
{\em Assume now that $X \rightarrow T$ is a projective morphism}. 
Thus $p'$ is a closed map. Then define: \medskip

\begin{tabular}{rl}
$\Gamma$ & the closed graph of $f$ in $X \times_{T} X'$, \\
$Y$ & the closed image of $X$ in $X'$, \\
$C$ & any closed subscheme of $X$ such that 
$X \smallsetminus C \rightarrow T$ is surjective \\ 
& and $f$ is a regular map $X \smallsetminus C \rightarrow Y$, \\
$E$ & the inverse image of $C$ in $\Gamma$.
\end{tabular}
\medskip

\noindent  Note that $p'(\Gamma) = Y$, as $p'$ is a closed map. 

A natural question is whether $\Gamma \rightarrow T$
is the family of closed graphs for the given family of maps, 
more precisely: whether $\Gamma \rightarrow T$ is a family of varieties,
as in \S 1.c, and every reduced fibre $\Gamma_t$ 
coincides with the {\em closed graph} $\Gamma(f_t)$.
A related question is whether $Y \rightarrow T$
is the family of closed images $\overline{f_t(X_t)}$, that is: whether
$Y \rightarrow T$ is a family of varieties and 
every reduced fibre $Y_t$ coincides with the {\em closed image} $\overline{f_t(X_t)}$.
The following equality of reduced schemes holds:
$$\Gamma_t = \Gamma(f_t) \cup E_t$$ 
and from this, applying $p'$, a description of $Y_t$ follows.

\begin{prop} \label{familyofgraphs}
In the setting above, assume that $T$ is a smooth curve.  
$(1)$ There is a nonempty open subset $T'$ of $T$ such that 
$\Gamma|_{T'} \rightarrow T'$ is the family of closed graphs
for the restricted family $f|_{T'}$.
$(2)$ There is a nonempty open subset $T''$ of $T'$ such that moreover
$Y|_{T''} \rightarrow T''$ is the family of closed images 
for the family $f|_{T''}$.
\end{prop}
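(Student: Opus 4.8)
The plan is to shrink the base step by step until first $\Gamma\to T$ and then $Y\to T$ become families of varieties in the sense of \S1.c, and then to read off the reduced fibres from the already established decomposition $\Gamma_t=\Gamma(f_t)\cup E_t$, discarding $E_t$ by a dimension count. To begin, $\Gamma$ is integral, being the closure in $X\times_T X'$ of the graph of $f$ over the integral $X$, and the projection $p\colon\Gamma\to X$ is proper and restricts to an isomorphism over the domain of definition of $f$; hence $p$ is surjective and birational, $\dim\Gamma=\dim X$, and $k(\Gamma)=k(X)$. Now, since $X\to T$ is a family of varieties all of its scheme fibres are irreducible, and this forces $k(T)$ to be algebraically closed in $k(X)=k(\Gamma)$: otherwise $X\to T$ would factor birationally through a finite cover of $T$ of degree $>1$, and its general fibre would be reducible. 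As we work in characteristic zero it follows that the generic fibre of $\Gamma\to T$ is geometrically integral over $k(T)$; and since a dominant morphism from an integral scheme to the smooth curve $T$ is automatically flat, the set of $t$ for which $\Gamma(t)$ is geometrically integral is constructible and contains the generic point of $T$, hence contains a nonempty open $T_1$. Over $T_1$ the morphism $\Gamma\to T$ is flat with irreducible fibres, which, being reduced and of finite type over the perfect field $\mathbb C$, are generically smooth, and which have constant dimension $\dim\Gamma-1=\dim X_t$; so $\Gamma|_{T_1}\to T_1$ is a family of varieties.

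Next I would control $E=p^{-1}(C)$. Since $X\smallsetminus C\to T$ is surjective, $C$ is a proper closed subscheme of $X$, so, $p$ being surjective, $E$ is a proper closed subset of the irreducible $\Gamma$; each of its finitely many components has dimension $\le\dim\Gamma-1$, so, removing from $T$ the images of the finitely many components of $E$ that do not dominate $T$ and intersecting with the nonempty open loci where the dominating components have fibre dimension $\le\dim\Gamma-2$, I obtain a nonempty open $T'\subseteq T_1$ with $\dim E_t\le\dim\Gamma-2<\dim X_t$ for all $t\in T'$. For such $t$ the reduced fibre $\Gamma_t=\Gamma(t)_{\mathrm{red}}$ is irreducible of dimension $\dim X_t$, and the closed graph $\Gamma(f_t)$ of the rational map $f_t$ is integral of dimension $\dim X_t$ as well; since $\Gamma_t=\Gamma(f_t)\cup E_t$ and $\Gamma_t$ is irreducible, either $\Gamma_t=\Gamma(f_t)$ or $\Gamma_t=E_t$, and the second is excluded by the dimensions. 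This proves $(1)$.

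For $(2)$ I would repeat the reduction for the closed image $Y=p'(\Gamma)$: it is integral, $Y\to T$ is dominant from an integral scheme to the smooth curve $T$ hence flat, and $k(T)\subseteq k(Y)\subseteq k(X)$ so $k(T)$ is algebraically closed in $k(Y)$ too; thus there is a nonempty open $T''\subseteq T'$ over which $Y|_{T''}\to T''$ is a family of varieties. It remains to identify the reduced fibres. Since $p'(\Gamma)=Y$ (because $X\to T$ is projective, so $p'$ is closed), for every $t$ the induced map $\Gamma(t)\to Y(t)$ is surjective on points, a preimage in $\Gamma$ of a point of $Y(t)$ necessarily lying over $t$. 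Consequently, for $t\in T''$, the reduced schemes $Y_t$ and $p'(\Gamma_t)=p'(\Gamma(f_t))$ have the same support; and since $X_t$ is projective, $p'$ restricted to $\Gamma(f_t)\subseteq X_t\times X'_t$ is closed, whence $p'(\Gamma(f_t))=\overline{f_t(X_t)}$. Therefore $Y_t=\overline{f_t(X_t)}$, which proves $(2)$.

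The step I expect to be the main obstacle is this initial reduction, namely guaranteeing that the fibres of $\Gamma\to T$, and then of $Y\to T$, are irreducible over a nonempty open set: this is precisely where the hypothesis that $X\to T$ is a family of varieties is used, through the assertion that $k(T)$ is algebraically closed in $k(X)$. Once the fibres are known to be varieties, discarding $E_t$ by dimension and identifying the image fibres via properness are routine, as is keeping track of reduced versus scheme-theoretic fibres throughout.
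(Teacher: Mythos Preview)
Your proof is correct and follows the same overall outline as the paper's: shrink $T$, bound $\dim E_t$, use the decomposition $\Gamma_t=\Gamma(f_t)\cup E_t$, and then push forward along $p'$. The one genuine difference is in how irreducibility of the fibres $\Gamma_t$ (and later $Y_t$) is obtained. You prove this as a separate step, via the field-theoretic observation that $k(T)$ is algebraically closed in $k(X)=k(\Gamma)$ so that the generic fibre is geometrically integral; this is precisely the step you flag as the ``main obstacle''. The paper bypasses it entirely: since $T$ is a smooth curve and $\Gamma$ is integral, every irreducible component of the fibre $\Gamma_t$ has dimension exactly $n=\dim\Gamma-1$; once generic flatness of $E\to T$ gives $\dim E_t<n$, any component of $\Gamma_t$ lying in $E_t$ is excluded by dimension, so every component lies in the irreducible set $\Gamma(f_t)$, whence $\Gamma_t=\Gamma(f_t)$ and irreducibility comes for free. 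The same ``easy remark'' (surjective morphism with irreducible equidimensional fibres is generically a family of varieties) then handles both $\Gamma$ and $Y$. So the paper's route is shorter and uses the smooth-curve hypothesis more directly; your route does more work but has the small conceptual advantage that the irreducibility argument via algebraic closure of $k(T)$ in $k(X)$ does not depend on $\dim T=1$ and would adapt to higher-dimensional bases.
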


\begin{proof}
We start with an easy remark.
Let $V \rightarrow T$ be a surjective morphism of varieties,
with irreducible fibres, all of the same dimension.
Then there is a nonempty open subset $T'$ of $T$ such that 
the restriction $V|_{T'} \rightarrow T'$ is a family of varieties.
Now we apply this to the relative varieties $\Gamma$
and $Y$ over the curve $T$.
In order to prove the statement we only need to identify the
reduced fibres $\Gamma_t$ and $Y_t$ for sufficiently general $t$.
This is what we do in the following.

(1) First, we show that $\Gamma_t = \Gamma(f_t)$ holds for every $t$
if $E \rightarrow T$ is a flat morphism.
Recall that this happens if and only if
every irreducible component of $E$ dominates $T$.

Write $\dim X =: n+1$. We have $\Gamma_t = \Gamma(f_t) \cup E_t$. Remark that 
$\dim E < n+1$. Then $\dim E_t < n$ for every $t$, because of flatness.
But all components of $\Gamma_t$ must have dimension $=n$ for every $t$.
Thus $E_t$ is not a component and $\Gamma_t = \Gamma(f_t)$, for every $t$. 
In particular, every $\Gamma_t$ is irreducible of dimension $n$.

In the present situation, the statement follows from the remark in the beginning.
In the general case, by generic flatness, we have that
$E|_{T'} \rightarrow T'$ is flat for some $T'$ and then,
because of the remark, the statement follows.

(2) We know that $Y_t = p'(\Gamma_t)$, and for $t \in T'$ 
we have from (1) that $\Gamma_t = \Gamma(f_t)$ 
and hence $Y_t = \overline{f_t(X_t)}$.
In particular every such $Y_t$ is irreducible,
and necessarily of dimension $= \dim Y -1$.
Because of the remark above, the statement follows.
\end{proof}

In general, the family of graphs needs not exist for the full family of maps,
as is seen later on in Remark \ref{example}.

\section{The varieties of general type in a family}

Using the technique of extension of differentials, from a special fibre
to the total space of the family, we gave in \cite{GP}, \S 1.4, a proof
of the assertion that the property of being a variety of general type
is invariant in a 1-dimensional small deformation, where small refers
to the Zariski topology. Here we point out that the same proof shows
indeed a slightly stronger assertion, to the effect that the same property
'propagates' from a component of a fibre.

\begin{thm}[] \label{generaltype}
Let $T$ be a smooth irreducible curve,
let $Y$ be a variety and let $Y \rightarrow T$ be a
projective morphism. Assume that some fibre
$Y_a$ has an irreducible component $Z$ which
is a variety of general type, and that the restriction
$Y \smallsetminus Y_a \rightarrow T \smallsetminus \{a\}$
is a family of varieties, as in \S {\rm 1.c}. 
Then there is a nonempty open subset $T'$ of $T$
such that $Y_t$ is a variety of general type for $t \in T'$.
\end{thm}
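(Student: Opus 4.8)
The plan is to adapt the differential-extension argument from \cite{GP}, \S 1.4, but to start the extension not from a whole fibre but only from the distinguished component $Z$ of the special fibre $Y_a$. The key point is that being of general type is detected by the positivity of a sufficiently high power of the canonical bundle, and pluricanonical differentials are what one propagates. First I would pass to a desingularization: after shrinking $T$ around $a$ and resolving, I may assume $Y$ is smooth, that the generic fibre $Y_t$ ($t\neq a$) is smooth, and that the special fibre $Y_a = Y(a)$ is a (possibly non-reduced, reducible) divisor in $Y$ whose reduction contains a smooth model $\widetilde Z$ of $Z$ as one of its components. The hypothesis that $Y\smallsetminus Y_a \to T\smallsetminus\{a\}$ is a family of varieties in the sense of \S 1.c guarantees that the generic fibre is irreducible of the right dimension, so that "general type of the generic fibre" is the statement we want, and by semicontinuity of plurigenera (or rather by the openness of the general-type locus once one fibre is of general type) it will then hold on a nonempty open $T'$.

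The heart is an extension statement: a pluricanonical form on $\widetilde Z$ extends, after multiplying by a suitable power of the equation of the special fibre and after passing to a high enough pluricanonical power, to a pluricanonical form on the total space $Y$, and the latter restricts nontrivially to the generic fibre $Y_t$. Concretely, fix $m$ with $h^0(\widetilde Z, mK_{\widetilde Z})$ large (this is where "$Z$ of general type" enters), and use adjunction $K_{\widetilde Z} \sim (K_Y + \widetilde Z)|_{\widetilde Z}$ together with the fact that the fibre class is trivial on $T\smallsetminus\{a\}$-pullbacks, i.e.\ $Y(a)$ is numerically a sum of fibres' worth of components supported over $a$. The extension of sections from a component of a reducible divisor is exactly the mechanism already exploited in \cite{GP}; one extends a basis of $H^0(\widetilde Z, mK_{\widetilde Z})$ to sections of $m(K_Y + Y(a))$ on $Y$ (using the vanishing/extension lemmas there, possibly after shrinking $T$), and since $Y(a)$ differs from $N\cdot(\text{fibre})$ by effective divisors supported over $a$, these give sections of $mK_{Y} + m\,(\text{fibre}) + (\text{stuff over }a)$, hence of $mK_{Y/T}$ twisted by a divisor pulled back from $T$. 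Restricting to a general fibre and counting, one concludes $h^0(Y_t, mK_{Y_t})$ grows like $t^{\dim Y_t}$, i.e.\ $Y_t$ is of general type.

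I expect the main obstacle to be the bookkeeping around the non-reduced and reducible special fibre: one must control the multiplicities of the components of $Y(a)$ and the discrepancy divisors introduced by the resolution, so that the twist one picks up is genuinely the pullback of a divisor from $T$ (and therefore invisible on a general fibre) rather than something that could kill the restricted sections. This is precisely the bound on how much one can "lose" when restricting an extended form from $Y$ to $Y_t$, and it is the same technical core as in \cite{GP}, \S 1.4 — the novelty here is only that the source of the extended forms is one component $Z$ rather than the whole fibre, which does not affect the mechanism but does require that the adjunction computation be done on $\widetilde Z$ relative to the other components meeting it. Once this is in place, the passage from "$Y_t$ of general type for general $t$" to "for all $t$ in a nonempty open $T'$" is immediate, since the general-type locus in the base is open under the family hypothesis.
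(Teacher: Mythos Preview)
Your outline is correct and matches the paper's strategy: resolve so that the component $Z$ acquires a smooth model $Z'$, extend pluricanonical sections from $Z'$ to the total space, and restrict to the general fibre. The paper, however, packages the extension step by a direct appeal to Takayama's extension theorem \cite{Tak2}, which yields a surjection
\[
\pi_*\mathcal{O}_V(mK_V)\otimes k(a)\ \longrightarrow\ H^0(Z', mK_{Z'})
\]
for $Z'$ any smooth component of the central fibre of the resolved family $\pi:V\to T$, with no adjunction or twist bookkeeping required on your side; then torsion-freeness (hence local freeness) of the pushforward on the smooth curve $T$ makes the rank constant, and the ordinary restriction map to $H^0(V_t,mK_{V_t})$ for $t\neq a$ finishes. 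The bookkeeping you flag as the main obstacle---multiplicities of components, discrepancies, arranging the twist to be pulled back from $T$---is exactly what Takayama's theorem absorbs, so you are planning to redo by hand what is already available as a citable black box. The genuinely deep content is the extension itself, not the divisor arithmetic surrounding it; once you recognise that, the proof is short.
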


\begin{proof}
Let $V \rightarrow Y$ be a resolution of singularities
such that the strict transform $Z'$ of $Z$ is smooth.  
So $Z'$ is of general type, and \ 
$\dim H^{0}(Z',mK_{Z'}) \geq cm^{n}$ for $m \gg 0$.
Denote by $\pi$ the composite map $V \rightarrow Y \rightarrow T$.
Since $V \rightarrow T$ is generically smooth, and since
$Y \rightarrow T$ is generically a family of varieties,
restricting to some neighborhood of $a$,
we may assume that for every $t \neq a$
the induced map $V_{t} \rightarrow Y_{t}$ is a
resolution of singularities.
As the general $V_t$ is irreducible, it follows that
every $V_t$ is connected, by the Zariski connectedness theorem.

The extension theorem of Takayama \cite{Tak2} applies,
and gives us that there is a surjective restriction homomorphism
$$\begin{array}{ccc}
\pi_{*}\mathcal O_{V}(mK_{V}) \otimes k(a) 
& \longrightarrow &  H^{0}(Z', mK_{Z'}) 
\end{array}.$$

The image $\pi_{*}\mathcal O_{V}(mK_{V})$ is a 
torsion free coherent sheaf on the smooth curve $T$,
hence it is a locally free sheaf.
So the dimension of 
$\pi_{*}\mathcal O_{V}(mK_{V}) \otimes k(t)$
is constant.
For $t=a$ this dimension  is  $\geq cm^{n}$ for $m \gg 0$, 
by what we have seen above.
\newcommand{\localmentelibero}
{let  $f:Y \rightarrow S$ be flat,  $\mathcal F$ on $Y$ be flat over $S$.
if $f_{*} \mathcal{F} \otimes k(t) \rightarrow H^{0}(Y_{t}, \mathcal{F}_{t})$
is surjective then it is an isomorphism, and the same holds in a neighborhood of $t$.
moreover $f_{*} \mathcal{F}$ is locally free in a neighborhood of $t$
[Hartshorne,  p. 290] }

For $t \neq a$, since $mK_{V}|_{V_{t}} = m K_{V_{t}}$,
one has the restriction homomorphism
$$\begin{array}{ccc}
\pi_{*}\mathcal O_{V}(mK_{V}) \otimes k(t) & \longrightarrow & 
H^{0}(V_{t}, \mathcal O_{V_{t}}(mK_{V}|_{V_{t}}))
= H^{0}(V_{t}, mK_{V_{t}})
\end{array}$$
and in a smaller neighborhood of $a$ we may assume that
this is an isomorphism for $t \neq a$.
It follows that
$\dim H^{0}(V_{t},mK_{V_{t}}) 
\geq cm^{n}$ for $m \gg 0$,  hence  $Y_{t}$
is of general type. This holds for every $t$
in a neighborhood of $a$.
\end{proof}

\section{Rigidity and limits}

Another key point in our treatment is a result about
limit maps in a generically trivial family of maps.
The result that we give here is only slightly more general than the
one in our previous paper, and the proof given here is more apparent.

Let $X$ be a projective variety. 
Let $T$ be a smooth irreducible curve,
let $Y \rightarrow T$ be a projective morphism, 
and let $f: T \times X \dasharrow Y$ be a family of rational maps on $X$,
as in \S 1.d. Assume that for every $t \in T$ the rational map 
$f_t : X \dasharrow \overline{f_t(X)}$ is of finite degree $k$.

Assume moreover that the family is {\em generically trivial}, 
as in Corollary \ref{weakrigidity},
i.e. that there is a nonempty open subset $T'$ of $T$ such that
the restriction $f|_{T'}$  is obtained as 
$$T' \times X \overset{1 \times h}{\dasharrow} T' \times U 
\overset{g}{\dasharrow} Y|_{T'}$$
where $h: X \dasharrow U$ is a fixed dominant rational map, and where $g$
is a birational isomorphism which restricts to a birational isomorphism
$g_t: U \dasharrow Y_t$ for every $t \in T'$.
Then $f_{t} = g_{t} \circ h$ for $t \in T'$,
so all these maps are birationally equivalent,
of  degree $\deg(f_{t}) = k = \deg(h)$.

\begin{prop}  \label{rigidityandlimits}
Assume that $f: T \times X \dasharrow Y$ is a family
of rational maps of constant degree $\deg(f_t)=k$, 
and assume that the family is generically trivial, as in the setting above.
Then all maps $f_t$ are in the same birational equivalence class.
\end{prop}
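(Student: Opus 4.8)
The plan is to prove that for every $t_0 \in T \smallsetminus T'$ the limit map $f_{t_0}$ is birationally equivalent to the fixed dominant map $h \colon X \dasharrow U$. Since for $t \in T'$ we already have $f_t = g_t \circ h$ with $g_t$ birational, this will show that all the maps $f_t$ lie in the single birational class of $h$. After replacing $U$ by a smooth projective birational model, and $h$ and $g$ accordingly, we may assume that $U$ is smooth and projective; then $T \times U$ is smooth as well.

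The core of the argument is the factorization statement: \emph{there is a rational map $\gamma \colon U \dasharrow Y_{t_0}$ with $f_{t_0} = \gamma \circ h$.} To build $\gamma$, take a general point $u \in U$: the restriction $g|_{T \times \{u\}}$ is a rational map from the smooth curve $T \times \{u\}$ to the projective variety $Y$, hence extends to a morphism $\tilde g_u \colon T \to Y$, which lies over the identity of $T$ because $g$ commutes with the projections to $T$; put $\gamma(u) := \tilde g_u(t_0) \in Y_{t_0}$. Since a rational map from a normal variety to a proper variety is regular off a closed subset of codimension $\geq 2$, the assignment $u \mapsto \gamma(u)$ is regular at a general point of $U$, so $\gamma$ is a genuine rational map. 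The same device gives, for general $x \in X$, a morphism $\tilde f_x \colon T \to Y$ extending $f|_{T \times \{x\}}$, with $\tilde f_x(t_0) = f_{t_0}(x)$. Now $f = g \circ (1 \times h)$ as rational maps on $T \times X$, since the two sides agree on the dense open $T' \times X$; restricting to the general curve $T \times \{x\}$ and comparing morphisms $T \to Y$ gives $\tilde f_x = \tilde g_{h(x)}$, and evaluation at $t_0$ yields $f_{t_0}(x) = \gamma(h(x))$ at a general point, i.e.\ $f_{t_0} = \gamma \circ h$. (Alternatively $\gamma$ is just the fibrewise limit of $g_t$, readable off the closure of the graph of $g$ in the spirit of \S 2.)

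Granting this, the conclusion is a degree count. As $h$ is dominant, $\overline{f_{t_0}(X)} = \overline{\gamma(h(X))} = \overline{\gamma(U)}$, so $\gamma \colon U \dasharrow \overline{f_{t_0}(X)}$ is dominant; and $\dim U = \dim X = \dim \overline{f_{t_0}(X)}$ because $f_{t_0}$ has finite degree, so $\gamma$ is generically finite and $\deg f_{t_0} = \deg \gamma \cdot \deg h$. The constant--degree hypothesis gives $\deg f_{t_0} = k = \deg h$, forcing $\deg \gamma = 1$; thus $\gamma$ is birational and $h = \gamma^{-1} \circ f_{t_0}$ shows $f_{t_0}$ birationally equivalent to $h$. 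Together with the case $t \in T'$, all the maps $f_t$ are birationally equivalent.

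The main obstacle is the factorization $f_{t_0} = \gamma \circ h$; everything after it is the elementary identity above, in which the finite--degree assumption is used exactly once, to deduce $\deg \gamma = 1$. The conceptual reason the factorization holds is that $g_t$ acts \emph{after} $h$, so the limit $\gamma = \lim_{t \to t_0} g_t$ is insensitive to the fibres of $h$; the real work is in turning this into algebra, which is exactly where one uses the curve base $T$ (the valuative criterion for the curves $T \times \{u\}$ and $T \times \{x\}$), the properness of $Y$, and the normality of the relevant total spaces --- i.e.\ the same circle of ideas as the family-of-graphs bookkeeping of \S 2. One must also check that the loci of $U$ and of $X$ over which $\gamma$, $\tilde g_u$ and $\tilde f_x$ are well behaved are dense, which reduces to a codimension estimate for the indeterminacy loci involved.
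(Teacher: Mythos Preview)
Your proof is correct and follows the same line as the paper's: factor $f_{t_0}$ through $h$ via a limit map $\gamma$ (the paper's $g_a$), then use the constant-degree hypothesis to force $\deg\gamma = 1$. The paper obtains $g_a$ more directly---since $T\times U$ is normal and $Y$ is proper over $T$, the indeterminacy locus of $g$ has codimension $\ge 2$ and so cannot contain the divisor $\{t_0\}\times U$, whence $g$ restricts to $g_a\colon U \dasharrow Y_{t_0}$ immediately; your curve-by-curve extension via $\tilde g_u$ is an equivalent but lengthier route to the same restriction (indeed you invoke the same codimension-$2$ fact to assemble the $\tilde g_u$ into a rational map, and you note the direct approach parenthetically).
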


\begin{proof}
Let $a \in T$ be any point, and let us
prove that $f_a$ is in the birational equivalence class
of every $f_t$ with $t \in T'$. 

We may assume that $U$ is a normal variety. 
Recall that for a rational map of varieties over a base curve,
from a normal variety to a variety which is proper over the base,
the exceptional locus is of codimension $\geq 2$,
by the valuative criterion of properness for instance.
It follows that $g: T \times U \dasharrow Y$ 
restricts to a rational map $g_a: U \dasharrow Y_a$.

Since $f = g \circ (1 \times h)$ holds as an equality of
rational maps $T \times X \dasharrow Y$
then there is equality of restrictions  $f_a = g_a \circ h$. 
And since $\deg(f_a) = k = \deg(h)$ then $\deg(g_a) = 1$
and $f_a$ is birationally equivalent to $h$ and to every $f_t$.
\end{proof}

\section{Linear rational maps}

Let $\mathbb P^m = {\rm P}(V^{m+1})$ and let
$X \subseteq \mathbb P^m$ be a non degenerate subvariety,
of dimension $n$. The space of linear maps
$\mathbb P^m \dasharrow \mathbb P^m$ is the projective space
\begin{center}
$\mathbb P^N = {\rm P}({\rm End(V)})$ \ with $N = (m+1)^2-1$.
\end{center}
We denote by $\alpha = \overline\ell$ a point in $\mathbb P^N$
and by $x = \overline v$ a point in $\mathbb P^m$.
The evaluation homomorphism  $(\ell,v) \mapsto \ell(v)$
determines a rational map
$$\mathbb P^N \times X \dasharrow \mathbb P^m$$
and this is the family of linear rational maps
$\alpha : X \dasharrow \mathbb P^m$.
We denote by $\overline{\alpha(X)}$ the closed image 
and by $\Gamma(\alpha)$ the closed graph of the map $\alpha$.

The subscheme $C \subset \mathbb P^N \times X$
defined by $\ell(v)=0$
is the exceptional locus of the rational map above.
Consider the projection $C \rightarrow \mathbb P^N$.
The fibre $C_{\alpha}$ is the trace in $X$ of the center of
the linear projection $\alpha: \mathbb P^m \dasharrow \mathbb P^m$.

\begin{rem} \label{example} \em
The subscheme $\Gamma \subset \mathbb P^N \times X \times \mathbb P^m$
defined by $\ell(v) \wedge w =0$ is the closed graph of the rational map above.
Clearly $\Gamma$ contains $C \times \mathbb P^m$.
The projection $\Gamma \rightarrow \mathbb P^N$
does not define the family of graphs. The fibre is given by
$\Gamma_{\alpha} = \Gamma(\alpha) \cup\, C_{\alpha} \times \mathbb P^m$.
It is clear, just looking at dimensions, that 
$\Gamma_\alpha = \Gamma(\alpha)$ if and only if
$C_{\alpha} = \emptyset$. 
\end{rem}

In $\mathbb P^N$ define the following subsets:
\begin{itemize}
\item[]
$R$ \ \ the subset of all $\alpha$ such that
$\alpha: X \dasharrow \overline{\alpha(X)}$ is of finite degree,
\item[]
$R_k$ \  the subset of all $\alpha \in R$ with $\deg(\alpha)= k$,
\end{itemize}
for every integer $k > 0$.

\begin{prop} \label{constructible}
$(1)$ $R$ is an open subset. $(2)$ $R_k$ is a constructible subset for every $k > 0$. 
\end{prop}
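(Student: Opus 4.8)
The plan is to analyze the incidence structure governing the degree of the linear rational map $\alpha:X\dasharrow\overline{\alpha(X)}$, working over the parameter space $\mathbb{P}^N$. First I would treat part $(1)$. Being ``of finite degree'' means that $\alpha|_X$ is generically finite onto its image, equivalently that $\dim\overline{\alpha(X)}=n$. Consider the closed graph $\Gamma\subset\mathbb{P}^N\times X\times\mathbb{P}^m$ from Remark~\ref{example}, with its projection $q:\Gamma\to\mathbb{P}^N$; the fibre is $\Gamma_\alpha=\Gamma(\alpha)\cup C_\alpha\times\mathbb{P}^m$. The closed image $\overline{\alpha(X)}$ is the image of $\Gamma(\alpha)$ under the third projection. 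The locus where $\dim\overline{\alpha(X)}\le n-1$ is closed: one forms the total image $Y=\mathrm{pr}_{13}(\Gamma)\subset\mathbb{P}^N\times\mathbb{P}^m$, which is closed since everything in sight is projective, and the set of $\alpha$ with $\dim Y_\alpha\le n-1$ is closed by upper semicontinuity of fibre dimension (Chevalley). One must be slightly careful because $Y_\alpha$ contains the junk $\mathbb{P}^m$ or large-dimensional pieces whenever $C_\alpha\neq\emptyset$; but if $C_\alpha=\mathbb{P}^m\cap(\ker\ell)$ has positive dimension it only inflates $\dim Y_\alpha$, so the inequality $\dim Y_\alpha\ge\dim\overline{\alpha(X)}$ is all we need, and conversely on the open set where $\alpha$ is a genuine projection from a centre meeting $X$ in the expected way the two agree. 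Thus $R=\{\alpha:\dim\overline{\alpha(X)}=n\}$ is open.

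For part $(2)$, having fixed $\alpha\in R$, the degree of $\alpha|_X$ equals the number of points in a general fibre of $\Gamma(\alpha)\to\overline{\alpha(X)}$. The cleanest route is to stratify by generic fibre cardinality using the flattening/generic-smoothness machinery relativized over $\mathbb{P}^N$. Concretely, restrict attention to $R$, let $\Gamma^\circ\subset\Gamma$ be the union of those components dominating $\mathbb{P}^N$ with fibres of dimension $n$ (this removes the $C_\alpha\times\mathbb{P}^m$ contamination generically), and let $Y^\circ$ be its image in $\mathbb{P}^N\times\mathbb{P}^m$; then $\Gamma^\circ\to Y^\circ$ is generically finite on each fibre over $R$, and by Noetherian generic flatness over $\mathbb{P}^N$ one may pass to a locally closed stratification $\mathbb{P}^N\supseteq R=\bigsqcup S_i$ on each piece of which $\Gamma^\circ\to Y^\circ$ has constant generic fibre degree; this constant is exactly $\deg(\alpha)$ for $\alpha\in S_i$. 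Hence $R_k$ is a finite union of locally closed sets, i.e. constructible.

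The main obstacle, and the step deserving genuine care, is the bookkeeping around the exceptional/junk components: $\Gamma_\alpha$ and $Y_\alpha$ are not equidimensional, and the naive semicontinuity statements apply to the total incidence varieties, not to the geometrically meaningful pieces $\Gamma(\alpha)$ and $\overline{\alpha(X)}$. One has to argue that passing from $\Gamma$ to the ``good part'' $\Gamma^\circ$ (components of relative dimension $n$ dominating $\mathbb{P}^N$) is itself a constructible operation compatible with fibrewise restriction over $R$ — this is where one invokes, over the base $\mathbb{P}^N$, the fact that for a morphism of finite type the locus where the fibre has a component of given dimension through a given stratum is constructible, together with generic flatness to make ``degree of the generic fibre'' well-defined and locally constant on strata. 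Once that reduction is in place, the remaining assertions are the standard facts that fibre dimension is upper semicontinuous (giving openness of $R$) and that generic fibre degree is constructible in families (giving constructibility of $R_k$).
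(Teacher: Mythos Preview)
Your argument for $(1)$ has a genuine gap. You form $Y=\mathrm{pr}_{13}(\Gamma)\subset\mathbb{P}^N\times\mathbb{P}^m$ and invoke upper semicontinuity of $\dim Y_\alpha$, hoping that $\{\dim Y_\alpha\le n-1\}$ coincides with the complement of $R$. But from Remark~\ref{example} one has $\Gamma_\alpha=\Gamma(\alpha)\cup C_\alpha\times\mathbb{P}^m$, hence $Y_\alpha=\overline{\alpha(X)}\cup\mathbb{P}^m=\mathbb{P}^m$ whenever $C_\alpha\neq\emptyset$. There are plenty of $\alpha\in R$ with $C_\alpha\neq\emptyset$ (take any projection whose centre meets $X$ but not in full dimension, e.g.\ project a nondegenerate curve in $\mathbb{P}^3$ from a line through one of its points), and for these $\dim Y_\alpha=m\ge n$; so the closed set you produce is strictly contained in $R^c$ and you have not shown $R$ open. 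Your proposed repair, passing to the ``good part'' $\Gamma^\circ$ consisting of components of relative dimension $n$ dominating $\mathbb{P}^N$, does nothing: $\Gamma$ is the closed graph of a rational map on the irreducible variety $\mathbb{P}^N\times X$ and is therefore itself irreducible, so $\Gamma^\circ=\Gamma$ and the junk $C_\alpha\times\mathbb{P}^m$ lives in the special fibres, not in a separate global component you can discard.

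The paper avoids this entirely by never touching $Y$ or $\Gamma$. For $(1)$ it works on $(\mathbb{P}^N\times X)\smallsetminus C$, where the evaluation is a morphism, and lets $U$ be the open set of pairs $(\alpha,x)$ with $\dim_x\alpha^{-1}(\alpha(x))=0$; then $R$ is the image of $U$ under the open projection $\mathbb{P}^N\times X\to\mathbb{P}^N$, hence open. For $(2)$ it again bypasses flattening and the graph: inside $\mathbb{P}^N\times X^{\times k}$ it takes the locally closed set $V_k$ of tuples $(\alpha,x_1,\dots,x_k)$ with all $(\alpha,x_i)\in U$, all $\alpha(x_i)$ equal, all $x_i$ distinct, and the fibre over $\alpha$ of dimension $n$ at that point; the image of $V_k$ in $\mathbb{P}^N$ is the constructible set $\{\deg\alpha\ge k\}$, and $R_k$ is the difference of two such images. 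This is more elementary than your stratification approach and, crucially, never requires separating $\Gamma(\alpha)$ from the contamination $C_\alpha\times\mathbb{P}^m$.
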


\begin{proof}
(1) In $(\mathbb P^N \times X) \smallsetminus C$ let $U$ be the subset
of pairs $(\alpha,x)$ such that $\dim_{x} \alpha^{-1}(\overline{\alpha(X)}) = 0$.
It is an open subset. In $\mathbb P^N$ the image of $U$ coincides with $R$.
In fact, if $\alpha$ admits some point $x \in X \smallsetminus C_{\alpha}$
which is isolated in its fibre, then its general fibre is of dimension $0$.
As the projection $\mathbb P^N \times X \rightarrow \mathbb P^N$ 
is an open map, $R$ is open in $\mathbb P^N$.
(2) In $\mathbb P^N \times X^{\times k}$ let $U_{k}$ be the subset of sequences 
$(\alpha,x_1,\ldots,x_k) =: (\alpha, \bar x)$ such that every $(\alpha, x_i)$ 
belongs to $U$ and $\alpha(x_1) = \cdots = \alpha(x_k)$ while in the
sequence $(x_1,\ldots,x_k)$ there is no coincidence. 
For every $\alpha \in R$
denote by $U_k(\alpha)$ the fibre of $U_k$ over $\alpha$.
Let $V_k$ be the subset such that $\dim_{(\alpha, \bar x)} U_k(\alpha) = n$.
This is a locally closed subset in $\mathbb P^N \times X^{\times k}$. 
In $\mathbb P^N$ the image ${V_k}'$ of $V_k$
is the locus of $\alpha \in R$ with $\deg \alpha \geq k$. 
In fact, if $\alpha$ admits some sequence $(x_1,\ldots,x_k)$
such that $\dim_{(\bar x)} U_k(\alpha) = n$, as the projection
$U_k(\alpha) \rightarrow X$ has 0-dimensional fibres, then $U_k(\alpha)$
dominates $X$, and hence for a general point $x_1$ the fibre of $\alpha$
contains at least $k$ distinct points $x_1,\ldots,x_k$.
It follows that $R_k$ coincides with ${V_k}' \smallsetminus {V_{k+1}}'$.
\end{proof}

\section{Refined finiteness theorem}

Let $X$ be a smooth projective variety of general type, of dimension $n$.
Let $X' \subset \mathbb P^M$ be the image of $X$ in the $r_n$-canonical 
birational embedding, see Theorem \ref{HKT}. 
Here $M = h^0(X,r_nK_X) -1$ is bounded above in (\ref{embdim}).
Every rational map of finite degree 
$f: X \dasharrow Y$ to a smooth projective variety of general type,
taking the $r_n$-canonical model $Y' \subset \mathbb P^M$, 
gives rise to a linear rational map $\alpha: X' \dasharrow \mathbb P^M$
with $\overline{\alpha(X')} = Y'$. 

In this natural way
the set of birational equivalence classes of rational maps of finite degree
from $X$ to varieties of general type is injected into 
the set of birational equivalence classes 
of linear rational maps of finite degree from $X'$ to $\mathbb P^M$.
Our main result is concerned
with the geometric structure of these special equivalence classes.

\begin{thm} \label{connectedcomponent}
Let $X$ be a smooth projective variety of general type.
A birational equivalence class of rational maps of degree $k$
from $X$ to smooth projective varieties of general type
forms a union of connected components of $R_k$.
\end{thm}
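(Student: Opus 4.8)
The plan is to show that the equivalence class $\mathcal{C} \subseteq R_k$ in question is both open and closed in $R_k$; since $R_k$ is a constructible set (Proposition~\ref{constructible}) it is a finite union of locally closed subvarieties, and a subset that is open and closed in $R_k$ in the analytic topology is automatically a union of connected components. Fix a point $\alpha_0 \in \mathcal{C}$, corresponding to a rational map $f_0 = g \circ \alpha_0$ with $f_0 \colon X \dasharrow Y_0$ of degree $k$, $Y_0$ smooth of general type, and $\overline{\alpha_0(X')} = Y_0'$ the $r_n$-canonical model. We must describe a Zariski (or analytic) neighbourhood of $\alpha_0$ in $R_k$ and verify it lies in $\mathcal{C}$, and also that the complement $R_k \smallsetminus \mathcal{C}$ is open near $\alpha_0$.

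First I would set up the universal family over a curve: given any $\alpha_1 \in R_k$ in the same connected component of $R_k$ as $\alpha_0$, join $\alpha_0$ and $\alpha_1$ by an irreducible curve $T \subseteq \overline{R_k}$ (one may pass to the normalization and shrink), giving a family of linear rational maps $f\colon T \times X' \dasharrow Y$ where $Y \to T$ is the closed image, a projective morphism. By Proposition~\ref{familyofgraphs} the graph and image families are well-behaved over a dense open $T^\circ$, so after shrinking $Y|_{T^\circ} \to T^\circ$ is a family of varieties with reduced fibres $Y_t = \overline{f_t(X')}$. Over $\alpha_0$ the fibre $Y_{\alpha_0}'$ is birational to the variety of general type $Y_0$; by Theorem~\ref{generaltype} (propagation of general type along a family whose special fibre has a component of general type), after further shrinking every $Y_t$ with $t \in T^\circ$ is a variety of general type. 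Now Corollary~\ref{weakrigidity} (weak rigidity) applies to $f|_{T^\circ}$: it is a trivial family over some nonempty open $T' \subseteq T^\circ$, hence by Proposition~\ref{rigidityandlimits} \emph{all} $f_t$ with $t \in T^\circ$, and in particular the limit $f_{\alpha_0}$, lie in a single birational equivalence class, namely that of $f_0$. Thus every point of the connected component of $R_k$ through $\alpha_0$ — at least, every point reachable by such a curve, which by constructibility is the whole component — corresponds to a map birationally equivalent to $f_0$, so $\mathcal{C}$ contains that component.

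Conversely, I must check that $\mathcal{C}$ is a \emph{union} of components, i.e.\ that two points of $\mathcal{C}$ in the \emph{same} component cause no contradiction (immediate) and that a point of $R_k$ in a component meeting $\mathcal{C}$ is \emph{in} $\mathcal{C}$ — but this is exactly what the previous paragraph gives, applied with $\alpha_0 \in \mathcal{C}$ and $\alpha_1$ an arbitrary point of its component. Hence each connected component of $R_k$ is either entirely inside $\mathcal{C}$ or disjoint from it, and since $\mathcal{C} \neq \emptyset$ it is a nonempty union of connected components of $R_k$. One small point to dispatch along the way: the composition $g$ with the birational model $Y_0 \dasharrow Y_0'$ is harmless because $R_k$ only sees $\overline{\alpha(X')}$ up to the canonical normalization, and birationally equivalent $f$'s give linear maps with birational images sharing the same $r_n$-canonical model — this is the content of the injection described just before the theorem statement, and it ensures "birational class of $f$" and "birational class of $\alpha$" match up.

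The main obstacle is the limit step: ensuring that the limiting linear map $f_{\alpha_0}$ over the special point genuinely has degree $k$ and genuinely falls under the hypotheses of the rigidity package, rather than degenerating (dropping degree, or the image fibre failing to be reduced/irreducible of the right dimension, or acquiring base locus that swallows the graph as in Remark~\ref{example}). This is handled by the combination of: constancy of degree on $R_k$ by definition; Proposition~\ref{familyofgraphs} to control graphs and images generically on $T$; and crucially the observation in Proposition~\ref{rigidityandlimits} that, the general member being trivial of degree $k = \deg(h)$, the limit $g_a$ of the birational part has degree $1$ and so $f_{\alpha_0}$ has the full degree $k$ — no degeneration occurs. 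Making sure one may always arrange the auxiliary curve $T$ to pass through $\alpha_0$ while staying in (the closure of, then back inside) $R_k$, and that "connected component" may be tested curve-by-curve, uses only that $R_k$ is constructible, hence a finite union of irreducible locally closed pieces, each of which is path/curve-connected.
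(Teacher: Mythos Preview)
Your proposal is correct and follows essentially the same route as the paper: pass to a smooth curve $T$ mapping into $R_k$ through the given point, apply Proposition~\ref{familyofgraphs} to get the image family, Theorem~\ref{generaltype} to propagate general type, Corollary~\ref{weakrigidity} to trivialize generically, and Proposition~\ref{rigidityandlimits} (using constancy of degree on $R_k$) to sweep up all limits, then invoke constructibility (Proposition~\ref{constructible}) to pass from curve-wise invariance to component-wise invariance. The only differences are cosmetic: your initial ``open and closed'' framing is never actually carried out as such (you immediately switch to the curve argument, as the paper does), and your phrasing ``join $\alpha_0$ and $\alpha_1$ by an irreducible curve $T \subseteq \overline{R_k}$'' should be tightened to a morphism $T \to R_k$ (else constant degree fails) and in general to a \emph{chain} of such curves, as you yourself note at the end.
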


\begin{proof}
Let $\alpha \in R_k$ be such that $\overline{\alpha(X')}$ is of general type. 
Let $T$ be a smooth irreducible curve with a morphism
$T \rightarrow R_k$, that we write as $t \mapsto \alpha_t$,
and with some point $a \in T$ such that $a \mapsto \alpha$.
We claim that all maps $\alpha_t$ are birationally isomorphic to $\alpha$.

Consider the rational map $T \times X' \dasharrow T \times \mathbb P^M$
which represents the family of maps $\alpha_t$.
Let $Y$ be its closed image in $T \times \mathbb P^M$. 
There is a nonempty open subset $T'$ of $T$ such that 
$Y|_{T'} \rightarrow T'$ is the family of closed images,
by Proposition \ref{familyofgraphs}. 

The fibre $Y_a$ contains $\overline{\alpha(X')}$, a variety of general type. 
It follows from Theorem \ref{generaltype}
that, shrinking $T'$ if necessary, we may assume that
for every $t \in T'$ the variety $\overline{\alpha_t(X')}$ is of general type.

Then it follows from Corollary \ref{weakrigidity} to the rigidity theorem
that, shrinking $T'$ again, we may assume that 
the restriction $T' \times X' \dasharrow Y|_{T'}$ is a trivial family.
And then it follows from Proposition \ref{rigidityandlimits} that 
all maps $\alpha_t$ with $t \in T$ are birationally equivalent, as we claimed.

So we reach the conclusion.
Every irreducible curve through $\alpha$ in $R_k$ is the image 
of a smooth irreducible curve $T$ as above, and therefore
is fully contained in the birational equivalence class of $\alpha$.
Therefore every connected curve through $\alpha$ in $R_k$
is fully contained in the birational equivalence class of $\alpha$.
Since $R_k$ is constructible, by Proposition \ref{constructible},
this means that the connected component of $\alpha$ in $R_k$
is contained in the birational equivalence class of $\alpha$.
\end{proof}

The space $R$ admits the stratification 
$\bigsqcup R_k$, where the degree $k$ is bounded above in $(\ref{deg1})$
in terms of the function $r_n$, or in $(\ref{deg2})$ in terms of 
the function $\epsilon_n$. As an immediate consequence of the previous result
we obtain the following refined version of the finiteness theorem,
which improves our previous result \cite{GP}, Theorem 4.3.

\begin{thm}  \label{finiteness}
Let $X$ be a smooth projective variety of general type.
The number of birational equivalence classes of rational maps of finite degree 
from $X$ to smooth projective varieties of general type
is bounded above by the number
of connected components of strata in the stratification
$R = \bigsqcup R_k$.
\end{thm}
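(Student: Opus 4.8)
The plan is to deduce the statement directly from Theorem \ref{connectedcomponent} by a bookkeeping argument, using the linearization of maps via the $r_n$-canonical embedding already set up just above that theorem.

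First I would make the correspondence precise. Fix the $r_n$-canonical birational embedding $X' \subset \mathbb P^M$ of $X$, with $M$ bounded as in $(\ref{embdim})$, and for every rational map of finite degree $f: X \dasharrow Y$ to a smooth projective variety of general type let $\alpha_f : X' \dasharrow \mathbb P^M$ be the associated linear rational map, obtained by passing to the $r_n$-canonical model $Y' \subset \mathbb P^M$, so that $\overline{\alpha_f(X')} = Y'$ and $\deg \alpha_f = \deg f$. Since both $X \dasharrow X'$ and $Y \dasharrow Y'$ are birational isomorphisms, and a birational isomorphism $Y \dasharrow Y_1$ induces one between the corresponding $r_n$-canonical models, the assignment $f \mapsto \alpha_f$ descends to a well-defined and injective map from the set of birational equivalence classes of such $f$ to the set of birational equivalence classes of linear rational maps $X' \dasharrow \mathbb P^M$ of finite degree. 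This is the injection already announced before Theorem \ref{connectedcomponent}; I would simply check well-definedness and injectivity explicitly here.

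Next I would localize each class to a single stratum. Degree is a birational invariant of maps of finite degree, so all $f$ in one class share a common degree $k$, and $\alpha_f$ then lies in $R_k$; moreover $k$ ranges over the bounded set allowed by $(\ref{deg1})$, equivalently $(\ref{deg2})$, which is what makes the final count finite. Since $\overline{\alpha_f(X')} = Y'$ is birational to the variety of general type $Y$, Theorem \ref{connectedcomponent} applies: the birational equivalence class of $f$, regarded via $\alpha_f$ as a subset of $R_k$, is a union of connected components of $R_k$. It is nonempty, since it contains the point $\alpha_f \in R_k$, hence it contains at least one connected component; and by the injectivity from the previous step, distinct classes give disjoint such unions.

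Finally I would conclude by counting. Sending each birational equivalence class to the nonempty set of connected components of strata that it occupies yields an injective map into the collection of all connected components of the strata $R_k$, $k \geq 1$; hence the number of birational equivalence classes of rational maps of finite degree from $X$ to smooth projective varieties of general type is at most the number of connected components of strata in $R = \bigsqcup R_k$. I do not anticipate a genuine obstacle: all the substance is contained in Theorem \ref{connectedcomponent}, and the only points needing care are the well-definedness and injectivity of the linearization on birational equivalence classes and the observation that such a class is nonempty as a subset of its stratum, so that it really does account for at least one connected component.
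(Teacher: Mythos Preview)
Your proposal is correct and follows exactly the route the paper intends: the paper presents Theorem~\ref{finiteness} as an immediate consequence of Theorem~\ref{connectedcomponent}, and your argument just makes that deduction explicit via the linearization $f \mapsto \alpha_f$ and the observation that distinct classes occupy disjoint nonempty unions of components in their respective strata $R_k$. The only cosmetic point is that in your final step the map you describe lands in \emph{subsets} of components rather than in components themselves; the conclusion still follows because those subsets are disjoint and nonempty, but you may want to phrase it as choosing one component per class to get a literal injection.
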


We showed in \cite{GP} that the finite number of classes of maps
in the finiteness theorem has an upper bound of the form $B(n,v)$
where $n = \dim (X)$ and $v = {\rm vol}(K_X)$, and that such a function
$B$ can be explicitely computed in terms of the function $r_n$.
This is obtained by means of rather cumbersome computations
with the complexity of a certain bunch of subvarieties of Chow varieties,
that was used as a parameter space for rational maps. We believe that an
analogous computation working with the much simpler parametrization
that has been established in the present paper will lead to a simpler
procedure and to a better result for the function $B$.

\vfill  \small

\noindent {\sc Lucio Guerra }\\
Dipartimento di Matematica e Informatica, Universit\`a di Perugia\\
Via Vanvitelli 1, 06123  Perugia, Italia\\
{\tt guerra@unipg.it}
\bigskip

\noindent {\sc Gian Pietro Pirola}\\
Dipartimento di Matematica, Universit\`a di Pavia\\
via Ferrata 1, 27100 Pavia, Italia\\
{\tt gianpietro.pirola@unipv.it}

\end{document}